\newcommand{\ensnombre}[1]{\mathbb{#1}}%
\newcommand{\R}{\ensnombre{R}}
\newcommand{\ind}{\mathbbm{1}}
\renewcommand{\P}{\mathbb{P}}
\newcommand{\E}{\mathbb{E}}
\def \Var{\hbox{{\textrm{Var}}}}
\theoremstyle{plain}
\newtheorem{thm}{Theorem}[section]
\newtheorem{cor}[thm]{Corollary}
\newtheorem{lem}[thm]{Lemma}
\newtheorem{prop}[thm]{Proposition}
\theoremstyle{remark}
\newtheorem{rem}[thm]{Remark}
\renewcommand{\geq}{\geqslant}
\renewcommand{\leq}{\leqslant}
\title{Asymptotic efficiency for Sobol' and Cramér-von Mises indices under two designs of experiments}
\author[1,2]{Thierry Klein}
\author[2,3]{Agn\`es Lagnoux} 
\author[4,5]{Thi Mong Ngoc Nguyen}
\author[1]{Paul Rochet}
\affil[1]{F\'ed\'eration ENAC ISAE-SUPAERO ONERA, Universit\'e de Toulouse, France.}
\affil[2]{Institut de Math\'ematiques de Toulouse; UMR5219. Universit\'e de Toulouse; CNRS.}
\affil[3]{UT2J, F-31058 Toulouse, France.}
\affil[4]{University of Science, VNUHCM, Ho Chi Minh City, Vietnam.}
\affil[5]{Vietnam National University, Ho Chi Minh City, Vietnam.}
\begin{document}

\maketitle


\begin{abstract} A variety of indices aim to quantify the impact of input variables on a response, typically the output from a complex computer code or black-box model. Most commonly used, the Sobol' index typically measures the influence of some inputs from an explained variance perspective. 
However, some situations may require a more targeted analysis of some inputs influence. With no prior information, distribution-based measures appear to be appealing. In this purpose, so-called Cramér-von Mises indices (and their generalization) have been proposed in the literature, defined as an excess probability integrated over the output distribution that aim to reflect influence on the whole distribution of the output rather than on the variance solely. Inference of these various indices has remained a challenging topic especially in presence of many inputs. While several Sobol' indices estimators are known to be optimal under regularity conditions,  the issue of asymptotic efficiency for Cramér-von Mises indices
has  been unaddressed in the literature so far. For these indices, we derive in this paper the efficiency bounds  and discuss the known methods to achieve such optimal bounds. Two estimation contexts are considered: the so-called Pick-Freeze scheme and the Given-Data setting, for which the estimation is produced from a unique input-output sample.


\end{abstract}

\textbf{Keywords:} Sensitivity analysis; efficient influence function; Pick-Freeze; Given-Data. 

\textbf{MSC:} 62G05; 62G20.

\section{Introduction}\label{sec:intro}

The use of complex computer models for the simulation and the analysis of natural systems from  physics, engineering, and other fields is by now routine. These models usually depend on many input variables and it is thus crucial to understand which input parameter - or which set of input parameters - have an influence on the output. This is the aim of sensitivity analysis which has become an essential tool for system modeling and policy support (see, e.g., \cite{razavi:hal-03518573}). Global sensitivity analysis methods consider the input vector as random and propose a measure of  influence of each subset of its components on the output. We refer to the seminal book \cite{saltelli-sensitivity} for an overview on global sensitivity analysis or to \cite{da2021basics} for a synthesis of recent trends in the field.

\medskip

More precisely, for the output $Y$ of a computer code $Y=G(X,W)$ where the inputs $X$ and $W$ are assumed to be mutually independent, sensitivity indices aim at quantifying the relative influence of one input on the output $Y$.
Among the different measures of global sensitivity analysis, variance-based measures are probably the most commonly used. In particular, Sobol' indices (introduced in \cite{pearson1915partial} and later revisited in the framework of sensitivity analysis in \cite{sobol1993,sobol2001global}) are the most popular and appealing in practice.  They were then generalized in several ways but can in most cases be written in  a synthetic closed form as follows

\begin{equation}\label{eq:formuleG} SI = \frac{\psi - \phi_2}{\phi_1 - \phi_2} \end{equation}
where $\psi$ is the only quantity depending on $X$ and $\phi_1$ and $\phi_2$ are normalising factors ensuring that the sensitivity index lies in $[0,1]$.

\medskip

Since in practice computing explicitly the theoretical value of $SI$ is out of reach, one of the main tasks in sensitivity analysis is to provide estimators, with guaranteed asymptotic properties such as consistency, rate of convergence (central limit theorem)... In the recent years, a myriad of different estimators has been proposed, see \cite[Chapter 4]{da2021basics} for a complete review. To compare these different estimators, it is then relevant to define a notion of ``optimality'' using a concept similar to the Cramér-Rao bound in parametric statistics.  
In this framework, optimality is assessed via the notion of \textit{asymptotic efficiency} and \textit{efficient influence functions} introduced in the seminal works \cite{hajek1970characterization, inagaki1970limiting} in a parametric setting and further extended to semi and non-parametric models in \cite {levit1976efficiency, beran1977robust, begun1983information} (see also \cite{bickel1993efficient, van2000asymptotic} for an extensive description of the theory of asymptotic efficiency). 

\medskip 

We tackle the issue of asymptotic efficiency in the two main frameworks related to Sobol' index inference, namely the \textit{Pick-Freeze} and the \textit{Given-Data} settings. When the context allows it, using the particular Pick-Freeze setting where two draws $Y$ and $Y'$ of the response are available for each realization of the input $X$, considerably simplifies the estimation process. The key to its success is to exploit a secondary expression of the parameter of interest $\psi $ 
making a natural empirical estimator of $ \psi $ available. The underlying model in this setting is the set of exchangeable bi-variate distributions.

\medskip

In most situations however, the practitioner cannot afford the luxury of choosing the input's values when generating the data. The most common scenario is to deal with independent and identically distributed (i.i.d.) copies of $(X,Y)$ where a duplicate draw $Y'$ is no longer available, which constitutes a particular case of Given-Data. In the non-parametric model on the distribution of the pair $(X,Y)$, 
asymptotically efficient estimators of the Sobol' index have been proposed in the literature, although they usually require strong assumptions such as a low-dimensional input or an extensively smooth non-parametric regression function $x \mapsto \mathbb E[Y | X = x]$. In practice, building an asymptotically efficient estimator that lives up to its theoretical properties on numerical simulations (e.g.~for high-dimensional inputs) remains somewhat of an open problem \cite{GKLPdV21}.

\medskip

When dealing with Cramér-von Mises indices or their generalization (presented in more details in Section \ref{sec:indices}),  their exact computation is even more challenging and the need of estimation is increased. Here again, one may propose estimators in each of the two settings discussed above: the Pick-Freeze one and the Given-Data one. Comparing the asymptotic performance of such estimators appears again to be essential.

\medskip

The article is organized as follows. In Section \ref{sec:indices}, we recall the definition of the Sobol' indices, the Cramér-von Mises indices, and their generalization together with the aim of this paper. We also recall what is meant by Pick-Freeze and Given-Data settings. Some intermediate discussions and results on asymptotic efficiency and efficient influence functions are given in Section \ref{sec:ass_eff}.  Finally, Sections \ref{sec:PF} and \ref{sec:given_data} are devoted to the characterization of the efficient influence functions for the Sobol' indices (Sections \ref{ssec:PF_sobol} and \ref{ssec:given_data_sobol}), for the Cramér-von Mises indices (Sections \ref{ssec:PF_CVM_mu_Y} and \ref{ssec:given_data_CVM_mu_Y}), for their generalized version (Sections \ref{ssec:PF_CVM_mu_known}-\ref{ssec:PF_CVM_mu_unknown} and \ref{ssec:given_data_CVM_mu_known}-\ref{ssec:given_data_CVM_mu_unknown}), and for the universal indices (Sections \ref{ssec:PF_univ} and  \ref{ssec:gicen_data_univ}) in the Pick-Freeze and in the Given-Data settings respectively. The paper concludes with a discussion on the known methods that achieves such optimality.


\section{Standard indices in sensitivity analysis}\label{sec:indices}


As mentioned in the Introduction, we consider the following model 
\[
Y = G(X,W)
\] 
for some measurable function $G$, where $Y$ is a real-valued square integrable output, $X \in \mathbb R^d$ is a vector-valued input, and $W$ a random term independent of $X$. 

\paragraph{Sobol' indices}

In the context of global sensibility analysis, the Sobol' index quantifies the influence of the input $X$ on $Y$ through the proportion of explained variance, using the conditional expectation $m(X) := \E[Y|X]$ as the best approximation (in the $L^2$ sense) of the response $Y$ by the input $X$. The index is thus defined as the variance ratio
\begin{equation}\label{def:sobol}
     S = \frac{\Var(m(X))}{\Var(Y)} \in [0,1],
\end{equation}
with a value close to one indicating a strong influence of the input $X$ on the output $Y$. For inference purposes, it is convenient to decompose the value of $S$ as a function of three somewhat simple parameters that can be handled separately. Specifically, we write
\begin{equation}\label{eq:formule} SI = \frac{\psi - \phi_2}{\phi_1 - \phi_2} \end{equation}
where $\psi = \mathbb E [m^2(X)] = \mathbb E [m(X) Y]$, while $\phi_1 =\mathbb E [Y^2] $ and $\phi_2=\mathbb E [Y]^2$ are the first two moments of the output. The parameter $\psi$ is the most relevant for our purposes as it is the only term involving the input $X$. It is also more challenging to estimate due to the presence of the unknown regression function $m$. The other two parameters $\phi_1$ and $\phi_2$ only depend on the output and their role is essentially to normalize the index value in the interval $[0,1]$. 

\medskip

The estimation of \textit{Sobol' indices} is most commonly performed in one of two main sampling settings, usually referred to as Pick-Freeze and Given-Data in the global sensitivity analysis literature. When possible, the Pick-Freeze setting where two draws $Y$ and $Y'$ of the response for each realization of the input $X$ allows an alternative expression of the parameter $\psi$ as
$$ \psi = \mathbb E [Y Y'], $$
for which a natural empirical estimator can be built from an i.i.d.\ sample $(Y_i, Y'_i)$,  $i=1,...,n$, known to be asymptotically efficient \cite{janon2012asymptotic}. Observe that the pair $(Y, Y')$ is exchangeable (i.e.~$(Y, Y')$ and $(Y', Y)$ are identically distributed).
By de Finetti's theorem \cite{de2017theory}, the model in the Pick-Freeze setting thus coincides with the set of exchangeable bi-variate distributions.  
Meanwhile, optimal estimation of the output's first two moments $\phi_1$ and $\phi_2$ is easily achieved empirically using the whole sample $(Y_i, Y'_i), i=1,...,n$. 

\medskip

Things are different in the Given-Data setting where the data consist of an i.i.d.\ sample from the pair $(X, Y)$. Here, no simple alternative expression of $\psi $ can be used to avoid dealing with the conditional expectation $m(X) = \mathbb E[ Y \, | \, X]$. While the calculation of the efficiency bound for $\psi$ is more technical in this context, its expression was first provided in \cite{doksum1995nonparametric} for a continuous input $X$, from the derivation of the efficient influence function associated to the parameter. Since then, several estimation methods have been proved to be asymptotically efficient in the Given-Data setting. For first-order inputs, asymptotically efficient estimators are proposed in \cite{DaVeiga13} and \cite{klein2024efficiency} while kernel estimation methods allow to provide an optimal estimator in higher dimensions under smoothness regularity conditions \cite{doksum1995nonparametric,GKLPdV21}. Here again, the other two parameters $\phi_1$ and $\phi_2$ are easily handled empirically. 

\paragraph{Cramér-von Mises indices}

The \textit{Cramér-von Mises index}, introduced in \cite{dette2013copula} and later studied in \cite{GKL18}, provides an alternative to the traditional variance-based approach in global sensitivity analysis. It can deal naturally with a multi-dimensional output $Y \in \mathbb R^k$ by replacing the conditional expectation by a conditional cumulative distribution function (c.d.f.)
$$ F(z \, | \, X) = \mathbb P(Y \leq z \, | \, X) \, , \, z \in \mathbb R^k.  $$
Here $\{Y\leqslant  z\}$ means that $\{Y_1\leqslant  z_1, \ldots, Y_k\leqslant  z_k\}$.
Letting  $F_Y(z) = \mathbb P(Y \leq z)$ for $z \in \mathbb R^k$ denote the c.d.f.\ of $Y$, the Cramér-von Mises index is then defined by
\begin{equation}\label{def:CVM} 
SI = \frac{ \int \Var (F(z \, | \, X)) \, d F_Y(z)}{\int F_Y(z) ( 1 - F_Y(z)) \, dF_Y(z) } = \frac{\psi - \phi_2}{\phi_1 - \phi_2},
\end{equation}
which fits the decomposition of Equation \eqref{eq:formule} with the three parameters
\begin{equation}\label{eq:integrated}  \psi = \int F(z \, | \, X)^2 \, d F_Y(z),  \quad \phi_1 = \int F_Y(z) \, d F_Y(z), \quad \text{ and } \quad \phi_2 = \int F_Y(z)^2 \, d F_Y(z). \end{equation}
A natural extension of this idea is to integrate the c.d.f.\ over an arbitrary probability measure $Q$ on $\mathbb R^k$, as a way to target particular values of the response. Thus the generalized Cramér-von Mises index has a similar expression as in \eqref{def:CVM} with the three parameters integrated over $Q$,
\begin{equation}\label{def:CVMmu}
     \psi = \int F(z \, | \, X)^2 \, d Q(z),   \quad \phi_1 = \int F_Y(z) \, d Q(z), \quad \text{ and } \quad \phi_2 = \int F_Y(z)^2 \, d Q(z).
\end{equation}

Inference of generalized Cramér-von Mises indices can be implemented in both the Pick-Freeze and Given-Data settings. An estimator of the initial Cramér-von Mises index is provided in \cite{GKL18} in the Pick-Freeze setting and in \cite{Chatterjee2019,FKL21} in the Given-Data setting, although the question of its optimality is not addressed. In Sections \ref{sec:PF} and \ref{sec:given_data}, we derive the efficient influence function and the asymptotic efficiency bound for original and generalized Cramér-von Mises indices under both sampling schemes after presenting some preliminary results on asymptotic efficiency.

\paragraph{Universal sensitivity indices} 

The previous generalized Cramér-von Mises indices can also be extended in another direction as done in \cite{FKL21}. More precisely, consider an output $Y$ lyning in a metric space $\mathcal Y$ and a family of test functions  parameterized by an element $z$ in some probability space $\mathcal Z$ and defined by 
 \[
\begin{matrix}
& \mathcal Z \times \mathcal Y & \to & \R\\
& (z,y) & \mapsto & T_z(y).\\
\end{matrix}
\]
Replacing the indicator function $\ind_{\{Y\leqslant z\}}$ by the test function $T_z(y)$ in the generalized Cramér-von Mises definition \ref{def:CVMmu}, we introduce the universal sensitivity index that has a similar expression as in \eqref{def:CVM} with the three parameters integrated over the distribution $Q$ on $\mathcal Z$,
\begin{equation}\label{def:univ}
     \psi = \int \E[T_z(Y) \, | \, X]^2 \, d Q(z), \; \phi_1 = \int \E[T_z(Y)^2] \, d Q(z),  \text{ and }  \phi_2 = \int \E[T_z(Y)]^2 \, d Q(z).
\end{equation}

\medskip

The universality is twofold. First, it allows to consider more general relevant  indices. 
Secondly, this definition encompasses, as particular cases, the classical sensitivity indices. 
\begin{itemize}
\item The so-called classical Sobol' index corresponds to $\mathcal Y = \mathcal Z = \R$ and the indentity test function $T_z=\operatorname{Id}$ (see \cite{sobol1993,sobol2001global}). The parameterized test functions do not depend on $z$.  
\item For $\mathcal Y = \mathcal Z = \R^k$ and $T_z(y)=\ind_{\{y\leqslant z\}}$,  one recovers the index based on the Cram\'er-von-Mises distance defined in \cite{GKL18} and recalled in \eqref{def:CVM}. 
\item For $\mathcal Y=\mathcal M$ ($\mathcal M$ being a manifold), $\mathcal Z=\mathcal M^2$,  and $T_z(y)=\ind_{\{y\in \widetilde B(z_1,z_2)\}}$, where $\widetilde B(z_1,z_2)$ stands for the ball whose center is the middle point between $z_1$ and $z_2$ with radius $\overline{z_1z_2}/2$, one recovers the index defined in \cite{FGM2017}. 
\item Consider the Wasserstein space $\mathcal W_2 (\R)$ i.e.\ the space of all measures defined on $\R$  endowed with the $2$-Wasserstein distance $W_2$ with finite $2$-moments. Let that $\mathcal Y = \mathcal W_2 (\R)$, $\mathcal Z = \mathcal W_2 (\R)^2$ and $T_{(F_1,F_2)}(F)=\ind_{W_2 (F_1,F)\leqslant W_2 (F_1,F_2)}$ for $F$, $F_1$, and $F_2$ three elements of  $\mathcal W_2 (\R)$. The corresponding indices introduced in \cite{FKL21} allow to perform sensitivity analysis for stochastic computer codes. 
\end{itemize}

\medskip

The aim of this work is to provide the influence efficient function for the classical sensitivity indices under two classical design of experiments (Pick-Freeze and Given-Data). We state and prove our results for the most common indices (Sobol' and Cramér-von Mises indices) and give the  results for generalized Cramér-von Mises and the universal sensitivity indices. In order to be self-contained, we start by recalling in Section \ref{sec:ass_eff} some basic facts about asymptotic efficiency.

\section{Background on asymptotic efficiency}\label{sec:ass_eff}


In a statistical model $\mathcal P$, the asymptotic efficiency bound for estimating a real parameter $\psi:\mathcal P \to \mathbb R$ is determined by studying the behavior of the parameter $\psi(P_t)$ as $P_t$ smoothly approaches a target distribution $P \in \mathcal P$ as $t \to 0^+$. Based on the idea that a parameter $\psi$ whose value hardly varies in a neighborhood of $P$ should be easier to estimate, 
the derivative of $t \to \psi(P_t)$ at $t=0^+$, if it exists, should provide some information on the quality that can be expected from an estimator of $\psi(P)$. For this, the parameter $\psi$ must be \textit{differentiable} in the sense that the derivative of $t \mapsto \psi(P_t)$ at $t = 0^+$ can be expressed as a continuous linear functional of a \textit{score function} $g$ of a smooth submodel $(P_t)_{t \in [0, \varepsilon)}$. Since all score functions are square-integrable with respect to $P$, the Riesz representation theorem states that there exists $\dot{\psi} \in L^2(P)$ such that
\begin{equation}\label{eq:infl} \lim_{t \to 0^+} \frac{\psi(P_t) - \psi(P)} t = \int \dot{\psi} g \, dP.    
\end{equation}
In asymptotic efficiency, a representation $\dot \psi$ is called an \textit{influence function}. When restricted to a one-dimensional parametric model $\mathcal P_g$ with score $g$, the well-known Cram\'er-Rao bound 
$$ B(\mathcal P_g) = \frac{ \int \dot{\psi} g \, dP }{\int g^2 dP}   $$
provides a lower bound for 
the asymptotic variance of a regular estimator of $\psi(P)$. In a larger model $\mathcal P$ where lesser information on the true distribution is available, a lower bound can not be smaller than that of any one-dimensional submodel $\mathcal P_g \subset \mathcal P$. Thus, the least favorable way to approach $P$ in the model $\mathcal P$, or equivalently the least favorable score function, provides a lower bound for the asymptotic variance of a regular estimator by 
$$ B(\mathcal P) =  \sup_{g \in \dot{\mathcal P}} \frac{ \int \dot{\psi} g \, dP}{\int g^2 dP}   $$
where $\dot{\mathcal P} \subset L^2(P)$ denotes the set of all score functions in $\mathcal P$ at $P$ (the tangent set). If $\dot{\mathcal P}$ is a closed linear subset of $L^2(P)$, it is easy to show that 
\begin{equation}\label{eq:score} \sup_{g \in \dot{\mathcal P}} \frac{ \int \dot{\psi} g \, dP}{\int g^2 dP}  = \int \widetilde \psi^2 dP 
\end{equation}
where $\widetilde \psi \in \dot{\mathcal P}$ is the unique influence function in the tangent set, called the \textit{efficient influence function}. The tangent set $\dot{\mathcal P}$ and the efficient influence function $\widetilde \psi$ are always implicitly calculated at the true distribution $P$ without specifying it. Similarly, it is always understood that a submodel $(P_t)_{t \in [0, \varepsilon)}$ is differentiable in quadratic mean with score $g$ at $t=0$ and $P_0 = P$. We refer to \cite{van2000asymptotic} for a detailed description of the theory. 

\medskip

Given an i.i.d.\ sample $Z_1,...,Z_n$ drawn from $P$, a necessary and sufficient condition for a (regular) estimator $\hat \psi_n$ of $\psi(P)$ to be asymptotically efficient is given by the following fundamental lemma. 

\begin{lem}[Lemma 25.23, \cite{van2000asymptotic}]\label{lem:eff_expansion}
A regular estimator $\hat \psi_n$ is asymptotically efficient at $P$ if and only if  the following expansion holds 
\begin{align}\label{eq:ass_eff}
     \hat \psi_n  =  \psi(P) + \frac 1 n \sum_{i=1}^n \widetilde \psi(Z_i) + o_{\P}( n^{-1/2}).  
\end{align}  

\end{lem}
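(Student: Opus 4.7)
The plan is to prove the two implications separately. The forward direction (``expansion implies efficiency'') follows cleanly from Le Cam's third lemma, while the converse rests on Hájek's convolution theorem, which is the technical heart of the argument.

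For the sufficient direction, assume the expansion \eqref{eq:ass_eff} holds and set $\sigma^2 = \E_P[\tilde\psi_P(Z)^2]$ together with $S_n = \frac 1 {\sqrt n}\sum_{i=1}^n \tilde\psi_P(Z_i)$, so that $\sqrt n (\hat\psi_n - \psi(P)) = S_n + o_{\P}(1)$ converges by the CLT to $\mathcal N(0, \sigma^2)$ under $P^n$. Given any differentiable submodel with score $g$, the condition \eqref{eq:diff} yields the usual LAN expansion of the log-likelihood ratio with mean $-\frac12 \E_P[g^2]$ and linear part $\frac{1}{\sqrt n}\sum g(Z_i)$. A joint bivariate CLT gives the joint weak limit of $S_n$ and this log-likelihood ratio, with asymptotic covariance $\E_P[\tilde\psi_P g]$; Le Cam's third lemma then transports $S_n$ to $\mathcal N(\E_P[\tilde\psi_P g], \sigma^2)$ under $P_{1/\sqrt n}^n$. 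Since pathwise differentiability gives $\sqrt n(\psi(P_{1/\sqrt n}) - \psi(P)) \to \E_P[\tilde\psi_P g]$, subtracting yields $\sqrt n(\hat\psi_n - \psi(P_{1/\sqrt n})) \to \mathcal N(0, \sigma^2)$ under $P_{1/\sqrt n}^n$, establishing simultaneously \eqref{def:regular} and \eqref{def:ass_eff2}.

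For the necessary direction, assume $\hat\psi_n$ is regular and asymptotically efficient. Set $X_n = \sqrt n(\hat\psi_n - \psi(P))$, keep $S_n$ as above, and $G_n = \frac 1 {\sqrt n}\sum g(Z_i)$ for $g \in \dot{\mathcal P}_P$. By tightness, extract a subsequence along which $(X_n, S_n, G_n)$ converges jointly under $P^n$ to a limit $(X, S, G)$. Applying Le Cam's third lemma to the submodel with score $g$, the asymptotic shift of $X_n$ under $P_{t/\sqrt n}$ equals $t\,\Cov(X, G)$; comparing with regularity and pathwise differentiability of $\psi$ forces this shift to equal $t\,\E_P[\tilde\psi_P g]$. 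The same reasoning applied to $S_n$ (or the direct CLT) gives $\Cov(S, G) = \E_P[\tilde\psi_P g]$, while asymptotic efficiency pins down $\Var(X) = \Var(S) = \sigma^2$.

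The decisive step, and where the real work lies, is concluding $X = S$ almost surely. Since $\tilde\psi_P \in \overline{\operatorname{lin}\dot{\mathcal P}_P}$, choose a sequence $g_k \to \tilde\psi_P$ in $L^2(P)$ with $g_k \in \operatorname{lin}\dot{\mathcal P}_P$, so that the associated $G_k$ satisfy $\Cov(X, G_k), \Cov(S, G_k), \Var(G_k) \to \sigma^2$. Joint Gaussianity of $(X, G_k)$ is not automatic from the Gaussianity of the marginals and constitutes the principal obstacle; it is supplied by Hájek's convolution theorem (Theorem 25.20 of \cite{van2000asymptotic}), which decomposes the limit law of any regular estimator as $L = L_0 \star M$ with $L_0 = \mathcal N(0, \sigma^2)$. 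Asymptotic efficiency collapses $M$ to $\delta_0$, placing $X$ in the closed $L^2(P)$-span of the tangent directions indexed by the $G_k$; combined with the covariance identities above, this identifies $X$ with the $L^2$-limit of the $G_k$, and an identical argument yields the same identification for $S$. Hence $X = S$ almost surely along any subsequential joint limit, so $X_n - S_n = o_{\P}(1)$, which is exactly the expansion \eqref{eq:ass_eff}.
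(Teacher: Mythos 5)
Your forward direction is correct and is the standard argument: the expansion plus the CLT, the LAN expansion from differentiability in quadratic mean, a joint bivariate CLT, and Le Cam's third lemma together yield $\sqrt n(\hat\psi_n - \psi(P_{1/\sqrt n})) \cvloi \mathcal N(0, \E_P[\tilde\psi_P(Z)^2])$, which is simultaneously regularity and efficiency. Note that the paper itself offers no proof of this lemma --- it is quoted as Lemma 25.23 of \cite{van2000asymptotic} --- so the relevant benchmark is the proof in that reference, whose ``if'' part is exactly what you wrote.

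The converse, however, has a genuine gap at the decisive step. You correctly identify that joint Gaussianity of $(X, G_k)$ (equivalently of $(X,S)$) is the obstacle, but the patch you offer does not supply it. The \emph{statement} of the convolution theorem gives $L = \mathcal N(0,\sigma^2) \star M$, and efficiency gives $M = \delta_0$; together these only pin down the \emph{marginal} law of $X$ as $\mathcal N(0,\sigma^2)$, which you already knew by hypothesis. They say nothing about the joint law of $(X,S)$ and cannot ``place $X$ in the closed $L^2(P)$-span of the tangent directions'': $X$ is a limit random variable on an abstract probability space carrying the subsequential joint limit, not an element of $L^2(P)$, so identifying it with an $L^2(P)$-limit of score functions conflates two different spaces. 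Likewise, your covariance identities $\Cov(X,G) = \E_P[\tilde\psi_P g]$ are read off from the Le Cam shift, but the shift equals a covariance only under joint Gaussianity (or with a separate uniform-integrability argument), which is precisely what is in question. The missing ingredient is the \emph{proof} of the convolution theorem, not its statement: that proof shows, via characteristic functions of the joint subsequential limit $(X,S)$ under $P^n$ and under the contiguous alternatives, that $X - S$ is independent of $S$ with $S \sim \mathcal N(0,\sigma^2)$. Once you have this independence, the argument closes in one line: $\Var(X) = \Var(X-S) + \Var(S)$, so efficiency ($\Var(X)=\sigma^2=\Var(S)$) forces $\Var(X-S)=0$ and $\E[X-S]=0$, hence $X = S$ almost surely along every subsequence, and convergence in distribution of $X_n - S_n$ to the constant $0$ upgrades to convergence in probability, which is \eqref{eq:ass_eff}. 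You should replace the ``closed span'' paragraph with this independence-plus-variance-additivity argument, or explicitly import the independence statement from the proof of Theorem 25.20.
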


While seemingly a condition on the limit distribution of an estimator, this result shows that asymptotic efficiency is really a property of convergence in probability. In particular,
coordinate-wise asymptotically efficient estimators $\hat \psi^1_n, \cdots, \psi_n^k $ are necessarily jointly asymptotically efficient, since the efficient influence function of the vector parameter $\Psi =(\psi^1, \cdots, \psi^k) $ at $P$ is the vector $\widetilde \Psi = (\widetilde\psi^1, \cdots, \widetilde\psi^k)$ of efficient influence functions. Moreover, asymptotic efficiency is stable through smooth transformations: if $h$ is a differentiable function from $\mathbb R^k$ to $\mathbb R$, the efficient influence function of the parameter $h \circ \Psi(P)$ can be identified from the definition by
\begin{align}\label{eq:EIF_compo} \lim_{t \to 0^+} \frac{h \circ \Psi(P_t) - h\circ \Psi (P)} t =  \int \bigl( \nabla h(\Psi(P))^\top \widetilde \Psi  \bigr) g \,  dP ,  
\end{align}
where $\nabla$ denotes the gradient operator. As a direct consequence of the Delta method, the estimator $h(\hat \psi_n^1, \cdots, \hat \psi_n^k) $ thus satisfies the condition of Lemma \ref{lem:eff_expansion},
\begin{align}\label{eq:estim_compo}
h(\hat \psi_n^1, \cdots, \hat \psi_n^k)  =  h \circ \psi (P) + \frac 1 n \sum_{i=1}^n \nabla h(\Psi(P))^\top \widetilde \Psi(Z_i) + o_\P(n^{-1/2}),      
\end{align}
proving it is asymptotically efficient. 

\medskip

In the next two sections, we determine the efficient influence functions and the efficiency bound for various sensitivity indices in the Pick-Freeze and in the Given-Data frameworks. The strategy to proceed always follows the same three-step guideline.
\begin{enumerate}
    \item Determine the model $\mathcal P$ and the tangent set $\dot{\mathcal P}$. In all cases discussed in this paper, $\dot{\mathcal P}$ is a closed linear subspace of $L^2(P)$. 
    \item Exhibit an influence function $\dot{\psi}$ of the parameter $\psi(P)$ using the Riesz representation of the derivative given in Equation~\eqref{eq:infl}.
    \item Find the efficient influence function $\widetilde \psi$ as the only representative in the tangent space.
\end{enumerate}

To find an influence function from Equation~\eqref{eq:infl}, one should expect to have to consider all score functions $g \in \dot{\mathcal P}$ and all arbitrary submodels. However, the formulation of the efficiency bound in Equation~\eqref{eq:score} suggests that considering a dense linear subset of $\dot{\mathcal P}$ is actually sufficient since it leads to the same value of the supremum. A good practical choice for such subset is to consider score functions $g$ that are bounded $P$-almost-surely. Then, a simple submodel $(P_t)_{t \in [0, \varepsilon)} $ with score $g$ is obtained by setting $P_t = (1+tg) P$, guaranteed to be a probability measure for $t$ sufficiently small. In fact, to exhibit an influence function $\dot{\psi}$ of a differentiable parameter $\psi(P)$, it is sufficient to verify that Equation~\eqref{eq:infl} holds for the submodels $P_t = (1+tg)P$ with $g \in \dot{\mathcal P}$ bounded $P$-almost-surely. We use this argument in most of the proofs.

\medskip

We start with a simple lemma that determines the tangent set of tensor product statistical models. 
For any measurable space $E$ (implicitly equipped with a $\sigma$-algebra $\mathcal A$), we denote by $\mathcal P(E)$ the set of probability measures on $E$. A statistical model $\mathcal P$ on $E$ is a subset of $\mathcal P(E)$. 

\begin{lem}\label{lem:tangent_prod} Let $E$ anf $F$ be two measurable spaces and $\mathcal P \subseteq \mathcal P(E)$ and  $\mathcal Q \subseteq \mathcal P(F)$ two statistical models 
with tangent sets $\dot{\mathcal P}$ and $\dot{\mathcal Q}$ at $P \in \mathcal P$ and $Q \in \mathcal Q$ respectively. The tangent set of the tensor product model
$$ \mathcal P \!\otimes\! \mathcal Q : = \{ P \!\otimes\! Q : P \in \mathcal P, \, Q \in \mathcal Q \} \subset \mathcal P(E \times F) $$
at $P \!\otimes\! Q$ is the direct sum of tangent spaces
$$ \dot{\mathcal P} \!\oplus\! \dot{\mathcal Q} := \{ g + h : g \in \dot{\mathcal P}, \,  h \in \dot{\mathcal Q} \} $$
where $g + h : E \times F \to \mathbb R$ is defined canonically by $(g+h)(y,z) = g(y) + h(z)$.
\end{lem}

\begin{proof}[Proof of Lemma \ref{lem:tangent_prod}]

Let $P_t = (1+tg) P$ and $Q_t = (1+th) Q$ with $g \in \dot{\mathcal P}$ and $h \in \dot{\mathcal Q}$. Since
$$ P_t \!\otimes\! Q_t = (1 + t(g+h) + t^2 gh) (P \!\otimes\! Q) ,  $$
where $gh$ is square-integrable with respect to $P \!\otimes\! Q$, the submodel $(P_t \!\otimes\! Q_t)_t$ has score $ g+ h $.
\end{proof}

The next proposition provides an expression of the efficient influence function for integrated parameters of the form $\psi = \int \psi_z d Q(z)$ such as those of Equation \eqref{eq:integrated}. This result will be useful to derive the efficiency bound of generalized Cramér-von Mises indices, whether $Q$ is known or unknown. Corollary \ref{cor:integral_influence} will handle the regular Cramér-von Mises index where $Q$ equals the distribution of the output $Y$.


\begin{prop}\label{prop:integral_influence} Let $\mathcal P$ and $\mathcal Q$ be two statistical models on respective measurable spaces $E$ and $F$. For all $z \in F$, let $\psi_z: \mathcal P \to \mathbb R$ be a real-valued parameter on $\mathcal P$ with efficient influence function $\widetilde \psi_z : E \to \mathbb R$ at $P$. Assume that the maps $ P \mapsto \psi_z(P)$ are  uniformly bounded for all $z \in F$ and verify, for all $g \in \dot{\mathcal P}$ bounded and $P_t = (1+tg) P$,
\begin{equation}\label{eq:condi} \sup_{z \in F} \, \Big| \psi_z(P_t) - \psi_z(P) - t \int \widetilde \psi_z(y) g(y) dP(y) \Big| = o(t). 
\end{equation}
\begin{enumerate}
    \item For a fixed $Q \in \mathcal Q$, the parameter 
$$ \psi^Q(P) := \int \psi_z(P) dQ(z), \quad P \in \mathcal P  $$
is well defined on $\mathcal P$ and has efficient influence function at $P$ given by 
$$\widetilde \psi^Q(y) = \int \widetilde \psi_z(y) dQ(z), \quad y \in E. $$  
    \item The parameter $\Psi (P \!\otimes\! Q)=  \psi^Q(P) , P \in \mathcal P , Q \in  \mathcal Q $ has efficient influence function at $P \!\otimes\! Q$:
    $$ \widetilde \Psi ( y,z ) = \widetilde \psi^Q(y) + \xi_{P,Q}(z), \quad (y, z) \in E \times F, $$
    with $\xi_{P,Q}(z)$ the orthogonal projection of $z \mapsto \psi_z(P)$ onto the linear span of $\dot{\mathcal Q}$ in $L^2(Q)$.
\end{enumerate}
\end{prop}

\begin{rem}
The result presented in item 1 can be deduced from item 2 with $\mathcal Q=\{Q\}$. 
\end{rem}

\begin{proof}[Proof of Proposition \ref{prop:integral_influence}] By Assumption \eqref{eq:condi}, for any submodel $(P_t)_{t >0}$ with score $g$ at $P_0 = P$,
$$ \frac{\psi^Q(P_t) - \psi^Q(P)} t =  \int \bigg( \int \widetilde \psi_z(y) dQ(z) \bigg) g(y) dP(y) + o(1). $$

Clearly, $\widetilde \psi^Q : y \mapsto \int \widetilde \psi_z(y) dQ(z)$ lies in $\operatorname{cl} ( \dot{\mathcal P} )$ by convexity, it is therefore the efficient influence function of $\psi^Q$. 

For the parameter $\Psi$, as discussed previously, consider a submodel $(P_t \!\otimes\! Q_t )_{t >0} $ on $\mathcal P \!\otimes\! \mathcal Q$ with $Q_t = (1+th) Q$. It has score $h + g$ at $P \!\otimes\! Q$ by Lemma \ref{lem:tangent_prod} and
\begin{align*} \Psi(P_t \!\otimes\! Q_t) & = \int \psi_z(P_t) dQ(z) + t \int \psi_z(P_t) h(z) dQ(z)\\
& = \Psi(P \!\otimes\! Q) + t \int \widetilde \psi^Q(y) g(y) d P(y) + t \int \psi_z(P) h(z) dQ(z) + o(t) 
\end{align*}
using assumption \eqref{eq:condi}.
Since $ \int g dP = \int h dQ = 0$, we may write
$$  \lim_{t \to 0^+} \frac{\Psi(P_t \!\otimes\! Q_t) - \Psi(P \!\otimes\! Q)} t = \int \big( \widetilde \psi^Q(y) + \psi_z(P) \big) \big( g(y) + h(z)\big) d (P \!\otimes\! Q)(y,z).$$
The function $(y,z) \mapsto \widetilde \psi^Q(y) + \psi_z(P) $ is thus an influence function. As $\widetilde \psi^Q$ lies in $\dot{\mathcal P}$, the result follows. 
\end{proof}

\begin{cor}\label{cor:integral_influence} Let $\mathcal P$ be a statistical model on a measurable space $E$ and $(\psi_z, z \in E)$ be a collection of real-valued parameters on $\mathcal P$. Suppose that Assumption \eqref{eq:condi} in Proposition \ref{prop:integral_influence} holds. 
Then, the parameter $\Psi(P) = \psi^P(P)$, $P \in \mathcal P$ has efficient influence function at $P$:
    $$ \widetilde \Psi(y) = \widetilde \psi^P(y) + \xi_{P,P}(y) , \quad y \in E  $$
    where the notation $\psi^P(P)$, $\widetilde \psi^P$, and $\xi_{P,P}$ have been introduced in Proposition \ref{prop:integral_influence}. 
\end{cor}

\begin{proof}[Proof of Corollary \ref{cor:integral_influence}] For any bounded function $g \in \dot{\mathcal P}$, $P_t=(1+tg)P$ and any fixed $z\in \R^k$,
\begin{align*} 
\Psi(P_t) 
& = \Psi(P) + t \int \widetilde \psi^P(y) g(y) d P(y) + t \int \psi_z(P) g(z) dP(z) + o(t) 
\end{align*}
using Assumption \eqref{eq:condi}.
Then
$$  \lim_{t \to 0^+} \frac{\Psi(P_t) - \Psi(P)} t = \int \big( \widetilde \psi^P(y) + \psi_y(P) \big) g(y) dP (y),$$
and $y \mapsto \widetilde \psi^P(y) + \psi_y(P) $ is an influence function. As $\widetilde \psi^P$ lies in $\dot{\mathcal P}$, the result follows. 
\end{proof}


\section{Pick-Freeze setting}\label{sec:PF}


In the Pick-Freeze sampling scheme, we observe $n$ independent draws from a pair of responses $(Y, Y')$ i.i.d.\ conditionally to the input $X$. Typically, if $Y$ can be expressed as $Y = G(X, W)$ for some function $G$, with $W$ a random term independent from $X$, the copy $Y'$ is obtained as $Y' = G(X, W')$ where $W$ and $W'$ are i.i.d. In this context, it is convenient to define the model $\mathcal P$ as the set of possible distributions of $(Y, Y')$, thus omitting the dependency in the input $X$, since it does not appear in the calculation of the efficiency bound. 

\medskip

Let $F_{Y,Y'}$ denotes c.d.f.\ of $P$, that is, for $(Y,Y') \sim P$ :
$$ \forall y, y' \in \mathbb R^k \, , \, F_{Y,Y'}(y, y') = \mathbb P (Y \leq y, Y' \leq y' ). $$
Note that the exchangeability of $(Y, Y')$ is equivalent to the symmetry of the c.d.f.\ $F_{Y,Y'}$ in its two arguments:
\[
F_{Y,Y'}(y,y')=F_{Y,Y'}(y',y) \;\;\; \forall (y,y')\in\R^2. 
\]

Moreover, we recall that $F_Y$ denotes 
 the marginal c.d.f.\ of $P$. 
Similarly, let $F_Z$ denote the c.d.f.\ of $Z$ drawn from $Q$ and $F_Z^-$ its left-side limit:
$$ \forall z \in \mathbb R^k \, , \, F_Z^-(z) = \mathbb P(Z < z). $$

\subsection{Sobol' indices 
}\label{ssec:PF_sobol}

Here the model $\mathcal P$ is the set of exchangeable distributions on $\mathbb R^k \times \mathbb R^k$, defined as 
$$\mathcal{P}:=\{ P\in\mathcal{P}(\mathbb R^k \times \mathbb R^k ): \forall A , B \in \mathcal B(\mathbb R^k) \, , \, P(A \times B) = P(B \times A) \} .  $$

The exchangeability condition entails that a differentiable submodel $(P_t)_{t \geq 0}$ has a symmetric score function $g$, as pointed out in \cite{klein2024note_efficiency}. Recall that $P$ stands for the distribution of $(Y,Y')$. We check that $P \in \mathcal P$ thanks to \cite[Lemma 2.4]{janon2012asymptotic}. The tangent set $\dot{\mathcal P}$ of $\mathcal P$ at $P$ is then given by 
\begin{equation}\label{eq:tangent1}
    \dot{\mathcal P} = \big\{ g \in L^2(P) :
  \E [g(Y, Y')]  = 0  \text{ and }   g(y, y') = g(y', y) \, , \,  \forall y, y' \in \R^k \big\}.
\end{equation}

\begin{prop}\label{prop:EIF_PF_sobol} \cite{janon2012asymptotic}
In the Pick-Freeze setting, if $\E[(YY')^2]<\infty$, the parameters 
\begin{itemize}
    \item $\psi(P) = \E[YY']$ has efficient influence function
    $ \widetilde \psi (y,y') = y y' -\psi(P),  $
    \item $\phi_1(P) = \E[Y^2]$ has efficient influence function
    $\widetilde \phi_1 (y,y') = \frac12 (y^2 + y'{^2})-\phi_1(P). $
    \item $\phi_2(P) = \E[Y]^2$ has efficient influence function
    $\widetilde \phi_2 (y,y') = \mathbb E[Y] (y + y')  - 2 \phi_2(P).$
\end{itemize}
\end{prop}

\begin{proof}[Proof of Proposition  \ref{prop:EIF_PF_sobol}]
For any bounded function $g \in \dot{\mathcal P}$ and $P_t=(1+tg)P$,  we have
\begin{eqnarray*}
	\frac{\psi(P_t)-\psi(P)}{t} &=& \E [YY' g(Y,Y') ] =\E \left[ ( YY'-\E[YY']) g(Y,Y') \right].
\end{eqnarray*}
Since the map $(y,y') \mapsto  y y'-\E[YY']   $ lies in $\dot{\mathcal P}$ (i.e.~it is symmetrical and $P$-square-integrable by assumption), it is the efficient influence function for the parameter $\E[YY']$. Proceeding in the same way, we show that $(y,y') \mapsto y - \mathbb E[Y]$ is an influence function for the parameter $\E[Y]$, as is $(y,y') \mapsto y' - \mathbb E[Y]$ by exchangeability. We can thus identify the efficient influence function 
$$ (y,y') \mapsto \frac12 (y + y')  - \mathbb E[Y] $$
as the only symmetrical influence function of $\mathbb E[Y]$. Finally, by the chain rule, 
$ \E[Y] (y + y')  - 2\mathbb E[Y]^2 $ 
is the efficient influence function of $\phi_2(P)$. 
The reasoning is the same for  $\E[Y^2]$.
\end{proof}

\subsection{Generalized Cramér-von Mises indices 
- case \texorpdfstring{$Q$}{Q} known}\label{ssec:PF_CVM_mu_known}

In this section, we consider the estimation of the generalized Cramér-von Mises  index  recalled in \eqref{eq:formule} and \eqref{def:CVMmu}. If $Q$ is known, the model $\mathcal P$ involves only the distribution $P$ of $(Y, Y')$, which is known to be exchangeable. Thus, as in Section \ref{ssec:PF_sobol}, the model $\mathcal P$ is the set of exchangeable distributions on $\mathbb R^k \times \mathbb R^k$ and the tangent set $\dot{\mathcal P}$ of $\mathcal P$ at $P$ is  given by \eqref{eq:tangent1}.

\begin{prop}
\label{prop:EIF_PF_mu_known}
In the Pick-Freeze setting where $Q$ is known, the parameters 
\begin{itemize}
    \item $\psi^Q(P) = \displaystyle \int_{\mathbb R^k} \P(Y\leqslant z, Y'\leqslant z) \, d Q(z) $ has efficient influence function
    $$ \widetilde \psi^Q (y,y') = 1-F_Z^-(y\vee y') -\psi^Q(P),  $$
    \item $\phi^Q_1(P) = \displaystyle \int_{\R^k} \P(Y\leqslant z)dQ(z) $ has efficient influence function
    $$ \widetilde \phi^Q_1 (y,y') =  1 - \frac 12 (F_Z^-(y)+F_Z^-(y'))  - \phi^Q_1(P), $$
    \item $\phi^Q_2(P) = \displaystyle \int_{\R^k} \P(Y\leqslant z)^2dQ(z)$ has efficient influence function
     $$ \widetilde \phi^Q_2 (y,y') = \int_{\R^k}  \P(Y\leqslant z ) (\ind_{\{y\leqslant z\}}+\ind_{\{y'\leqslant z\}}) dQ(z) -2\phi^Q_2(P).  $$
\end{itemize}
\end{prop}

\begin{proof}[Proof of Proposition  \ref{prop:EIF_PF_mu_known}]

For $z \in \mathbb R^k$, consider the two parameters on $\mathcal P$ given by
\begin{align*}
\psi_z(P) & 
=\P(Y \leqslant z, Y'\leqslant z) 
\quad \text{and} \quad \phi_z(P)  
=\P (Y\leqslant z),
\end{align*} 
whence 
\begin{align*}
\psi^Q(P) & =\int \psi_z(P) dQ(z),\quad 
\phi_1^Q(P)  = \int \phi_z(P) dQ(z), \quad \text{and} \quad 
\phi_2^Q(P)  = \int \phi_z^2(P) dQ(z), 
\end{align*} 
and it remains to apply Proposition \ref{prop:integral_influence}. For any bounded function $g \in \dot{\mathcal P}$, $P_t=(1+tg)P$ and any fixed $z\in \R^k$, we have
\begin{align*}
	\frac{\psi_z(P_t) - \psi_z(P)} t &= \E [\ind_{\{Y\leqslant z,\, Y'\leqslant z\}} g(Y,Y') ] 
	   =\E \left[ ( \ind_{\{Y\leqslant z,\, Y'\leqslant z\}}-\psi_z(P)) g(Y,Y') \right],
\end{align*}
from which we deduce that the parameter $\psi_z(P)$ satisfies assumption \eqref{eq:condi}. 
Since the map 
\[
\widetilde \psi_{z}\colon (y,y') \mapsto  \ind_{\{y\leqslant z,\, y'\leqslant z\}}-\psi_z(P)
\]
lies in $\dot{\mathcal P}$, it is the efficient influence function at $P$ of the parameter $\psi_z(P)$ for any fixed $z\in \R^k$. 

\medskip 

Proceeding in the same way, we show that, for any $z\in \R^k$, $(y,y') \mapsto \ind_{\{y\leqslant z\}} - \phi_z (P)$  is an influence function for the parameter $\phi_z (P)= \P(Y\leqslant z)$, as is $(y,y') \mapsto \ind_{\{y'\leqslant z\}} - \phi_z (P)$ by exchangeability. Clearly, the parameter $\phi_z(P)$ satisfies assumption \eqref{eq:condi}.
We can thus identify the efficient influence function 
$$\widetilde \phi_z \colon (y,y') \mapsto \frac 12 (\ind_{\{y\leqslant z\}}+\ind_{\{y'\leqslant z\}})  -  \phi_z (P)$$
as the only symmetrical influence function of $\phi_z(P)$.

\medskip

Finally, by the chain rule, 
$2 \phi_z(P) \widetilde \phi_z$ 
is the efficient influence function of the parameter $P \mapsto \phi_z(P)^2$. The result follows from the item 1 of Proposition \ref{prop:integral_influence}.
\end{proof}


\subsection{Generalized Cramér-von Mises  indices 
- case \texorpdfstring{$Q$}{Q} unknown}\label{ssec:PF_CVM_mu_unknown}

In this section, we consider the generalized Cramér-von Mises  index  recalled in \eqref{eq:formule} and \eqref{def:CVMmu} when $Q$ is unknown. Hence, the model $\mathcal Q$ on $Q$ is the non-parametric model on $\mathbb R^k$ while the model $\mathcal P$ still designates the set of exchangeable distributions on $\mathbb R^k \times \mathbb R^k$. Let $P \!\otimes\! Q$ be the distribution of $(Y,Y',Z)$. By Lemma \ref{lem:tangent_prod}, the tangent set of $\mathcal P \!\otimes\! \mathcal Q$ at $P \!\otimes\! Q$ is the direct sum
$$ \dot{\mathcal P \!\otimes\! \mathcal Q} = \dot{\mathcal P} \oplus \dot{\mathcal Q}  $$
where $\dot{\mathcal P}$ as defined in \eqref{eq:tangent1} while $\dot{\mathcal Q}$ is the maximal tangent set 
$$ \dot{\mathcal Q} = \Big\{ h \in L^2(Q) :\, \int h \, d Q = 0 \Big\}.   $$


\begin{prop}
\label{prop:EIF_PF_mu_unknown}
In the Pick-Freeze setting where $Q$ is unknown, the parameters 
\begin{itemize}
    \item $\Psi(P \! \otimes \! Q) = \psi^Q(P) = \displaystyle \int_{\mathbb R^k} \P(Y\leqslant z, Y'\leqslant z) \, d Q(z)$ has efficient influence function
    $$ \widetilde \Psi (y,y',z) = 1 - F_Z^-(y \vee y') + F_{Y,Y'}(z,z) - 2 \psi^Q(P),  $$
    \item $\Phi_1 (P\! \otimes \! Q) = \phi^Q_1(P) = \displaystyle \int_{\mathbb R^k} \mathbb P(Y \leq z) \, d Q(z)$ has efficient influence function
    $$ \widetilde \Phi_1 (y,y',z) =   1 - \frac 12 (F_Z^-(y)  + F_Z^-(y')) + F_{Y}(z) - 2 \phi^Q_1(P), $$
    \item $\Phi_2(P\! \otimes \! Q) = \phi_2^Q(P) = \displaystyle \int_{\mathbb R^k} \mathbb P(Y \leq z)^2 \, d Q(z)$ has efficient influence function
 $$ \widetilde \Phi_2 (y,y',z) = \int_{\R^k} \P(Y\leqslant z)(\ind_{\{y\leqslant z\}}+\ind_{\{y'\leqslant z\}})]dQ(z)+ \P(Y\leqslant z)^2 -3 \phi^Q_2(P).  $$
\end{itemize}

\end{prop}

\begin{proof}[Proof of Proposition  \ref{prop:EIF_PF_mu_unknown}]
It suffices to apply item 2 of Proposition \ref{prop:integral_influence} using the expressions of $\widetilde \psi_z$ and $\widetilde \phi_z$ computed in the proof of Proposition \ref{prop:EIF_PF_mu_known}. 
\end{proof}

\subsection{Cramér-von Mises indices 
} \label{ssec:PF_CVM_mu_Y}

In this section, we consider the estimation of the Cramér-von Mises indices in \eqref{def:CVM}  when $Q$ is the (shared) marginal distribution of $Y$ and $Y'$, denoted by $P_Y$. Recall that $P$ stands for the distribution of $(Y, Y')$.
Consider a $4$-uplet $(Y, Y', Z, Z')$ with distribution $P \!\otimes\! P$.

\begin{prop}
\label{prop:EIF_PF_mu_Y}
In the Pick-Freeze setting where $Q=P_Y$, the parameters
\begin{itemize}
    \item $\Psi(P) = \psi^P(P) =  \displaystyle \int_{\mathbb R^k} \P(Y\leqslant z, Y'\leqslant z)\, d P_Y(z)$ has efficient influence function
    $$ \widetilde \Psi (y,y')  = 1 - F_Y^-(y \vee y') +\frac12 ( F_{Y,Y'}(y, y) + F_{Y,Y'}(y', y') ) 
 - 2\psi^P(P),  $$
    \item $\Phi_1 (P) = \phi^P_1(P) = \displaystyle \int_{\mathbb R^k} \mathbb P(Y \leq z) \, d P_Y(z)$ has efficient influence function
    $$ \widetilde \Phi_1 (y,y') =  1 +\frac 12(  \P(Y=y) + \P(Y=y')) -  2\phi^P_1(P), $$
    \item $\Phi_2(P) = \phi_2^P(P) = \displaystyle \int_{\mathbb R^k} \mathbb P(Y \leq z)^2 \, d P_Y(z)$ has efficient influence function
        $$ \widetilde \Phi_2 (y,y') = \int_{\R^k} \P(Y\leqslant z)(\ind_{\{y\leqslant z\}}+\ind_{\{y'\leqslant z\}})\, d P_Y(z) + \frac 12 (F_Y^2(y)+F_Y^2(y'))- 3\phi^P_2(P).  $$
\end{itemize}
\end{prop}

\begin{proof}[Proof of Proposition \ref{prop:EIF_PF_mu_Y}]
It suffices to apply Corollary \ref{cor:integral_influence} and Proposition \ref{prop:EIF_PF_mu_unknown}. 
\end{proof}

\subsection{Extension to universal sensitivity indices}
 \label{ssec:PF_univ}

It suffices to slightly adapt the proofs of Propositions \ref{prop:EIF_PF_mu_known} and \ref{prop:EIF_PF_mu_unknown} to get the same results for the universal indices defined in \eqref{def:univ}, provided that all test functions $T_z(Y)$ at $Y$ are square-integrable and the associated parameters $\psi_z = \mathbb E [ T_z(Y)T_z(Y') ]$, $\phi_{1,z} = \mathbb E[ T_z(Y)^2]$, and $\phi_{2,z} = \mathbb E[ T_z(Y)]^2$ all satisfy Condition \eqref{eq:condi} of Proposition \ref{prop:integral_influence}.

\medskip

When $Q$ is known, the parameters 
\begin{itemize}
    \item $\psi^Q(P) = \displaystyle \int_{\mathcal Z} \E[T_z(Y)T_z(Y')]\, d Q(z) $ has efficient influence function
    $$ \widetilde \psi^Q (y,y') = \int_{\mathcal Z} \E[T_z(y)T_z(y')]\, dQ(z) -\psi^Q(P),  $$
    \item $\phi^Q_1(P) = \displaystyle \int_{\mathcal Z} \E[T_z(Y)^2]\, dQ(z) $ has efficient influence function
    $$ \widetilde \phi^Q_1 (y,y') =  \frac 12 \int_{\mathcal Z} (T_z(y)^2+T_z(y')^2)\, dQ(z) - \phi^Q_1(P), $$
    \item $\phi^Q_2(P) = \displaystyle \int_{\mathcal Z} \E[T_z(Y)]^2\, dQ(z)$ has efficient influence function
     $$ \widetilde \phi^Q_2 (y,y') =  \int_{\mathcal Z} \E[T_z(Y)] (T_z(y)+T_z(y'))\, dQ(z)  -2\phi^Q_2(P).  $$
\end{itemize}

\medskip

When $Q$ is unknown, the parameters 
\begin{itemize}
    \item $\Psi(P \! \otimes \! Q) $ has efficient influence function
    $\widetilde \Psi (y,y',z) = \widetilde \psi (y,y') + \E[T_z(Y)T_z(Y')] -  \psi^Q(P), $
    \item $\Phi_1 (P\! \otimes \! Q) $ has efficient influence function
    $ \widetilde \Phi_1 (y,y',z) =    \widetilde \phi_1 (y,y')+ \E[T_z(Y)^2] -  \phi^Q_1(P), $
    \item $\Phi_2(P\! \otimes \! Q) $ has efficient influence function
 $ \widetilde \Phi_2 (y,y',z) =  \widetilde \phi_2 (y,y') + \E[T_z(Y)]^2 - \phi^Q_2(P).  $
\end{itemize}

\section{Given-Data setting}\label{sec:given_data} 

In this section, we consider the estimation of the Sobol' index, the  Cramér-von Mises index and its generalized version with respect to $X$ 
 using a Given-Data design of experiment. In the rest of this section, $P$ stands for the distribution of $(X,Y)$.

\subsection{Sobol' indices 
}\label{ssec:given_data_sobol}

In this section, we observe a sample $(X_1,Y_1),\dots,(X_n,Y_n)$ drawn independently from the distribution $P$. The model $\mathcal{P}$ contains all the distributions on $\R^d \times \R$. As pointed out in \cite[Remark 25.16]{van2000asymptotic}, in this model, the tangent set $\dot{\mathcal P}$ at $P \in \mathcal P$ is the maximal tangent set, containing all $P$-square-integrable functions $g$ with zero integral
\begin{equation}\label{eq:tangent2}
\dot{\mathcal P} = \{ g \in L^2(P) : \E[ g(X,Y)] = 0 \}. 
\end{equation}

Dealing with the parameter  $\psi(P) = \E[\E[Y|X]^2]$, the efficient influence function has been given in \cite{doksum1995nonparametric} (without proof) and in \cite{da2008efficient} under absolute continuity conditions.  For completeness, we here calculate the efficient influence function in the general case.

\begin{prop}\label{prop:EIF_given_data_sobol}
In the Given-Data setting, if $\E[Y^4]<\infty$, the parameters 
\begin{itemize}
    \item $\psi(P) = \E[\E[Y|X]^2]$ has efficient influence function
     $$\widetilde \psi (x,y)= (2y-m(x) )m(x) - \psi(P) \quad \text{where $m(x) = \mathbb E[Y | X = x ]$},$$ 
    \item $\phi_1(P) = \E[Y^2]$ has efficient influence function
    $\widetilde \phi_2 (x,y) = y^2-\phi_1(P), $
    \item $\phi_2(P) = \E[Y]^2$ has efficient influence function
    $\widetilde \phi_2 (x,y) = 2(\E[Y] y - \phi_2(P)). $
\end{itemize}
\end{prop}

\begin{proof}[Proof of Proposition  \ref{prop:EIF_given_data_sobol}]
Let $P_t = (1+tg) P$ with $g$ bounded, $(X_t,Y_t)$ with distribution $P_t$ and denote  the conditional expectation function under $P_t$ by
$$ m_t : x\in \R^d \mapsto \mathbb E [  Y_t | X_t = x ]   $$
that satisfies
\begin{equation}\label{eq:infl_ident} \mathbb E [  Y_t h(X_t) ]  = \mathbb E [ m_t(X_t) h(X_t) ]  \end{equation}
for all measurable function $h : \mathbb R^p \to \mathbb R$ such that $\mathbb E [|  Y_t h(X_t) |] < + \infty $. 
From
\[
\psi(P_t)= \mathbb E[ Y_t m_t(X_t) ] = \mathbb E [ Y m_t(X) ] + t \, \mathbb E [Y m_t(X) g(X,Y) ], 
\]
we deduce
\begin{align*} \frac{\psi(P_t) - \psi(P)} t & =  \mathbb E \Big[ \frac{m_t(X) - m(X)} t  Y \Big] +  \mathbb E \big[  Y m_t(X) g(X,Y) \big]
\end{align*}
recalling that $\E[Ym(X)]=\E[m^2(X)]$. Taking $h=m$ in  \eqref{eq:infl_ident} yields in particular
$$  \mathbb E \big[  Y m(X) \big(1 +t g(X, Y) \big)\big] =  \mathbb E \big[ m_t(X) m(X) \big(1 +t g(X, Y) \big) \big]      $$
leading to
$$ \mathbb E \Big[ \frac{m_t(X) - m(X)} t  m(X) \Big] =  \mathbb E \big[ (Y - m_t(X) ) m(X) g(X, Y) \big]  $$
whence
\begin{align*} \frac{\psi(P_t) - \psi(P)} t 
& = \mathbb E \big[ \big( (Y - m_t(X) ) m(X) + Y m_t(X) \big) g(X,Y) \big]. 
\end{align*}
We get
\begin{align*}  
\Big| \frac{\psi(P_t) - \psi(P)} t  &- \mathbb E \big[  \big( 2Y - m(X) \big)  m(X) g(X,Y) \big] \Big| \\
& =  \Big| \mathbb E \big[ (m_t(X) - m(X)) (Y - m(X)) g(X, Y) \big] \Big| \\
& \leq \Vert g \Vert_\infty \mathbb E [(Y - m(X))^2 ] ^{1/2} \mathbb E [(m_t(X) - m(X))^2 ] ^{1/2}\\
& \leq 2 \Vert g \Vert_\infty \mathbb E [Y^2 ] ^{1/2} \mathbb E [(m_t(X) - m(X))^2 ] ^{1/2}
\end{align*}
by Jensen's inequality. Now, by direct calculation, we verify that
\begin{align*}
\mathbb E \big[  \big( m_t( X) - m( X) \big)^2 \big] & = t \, \mathbb E \big[  \big( m_t( X) - m( X) \big) \big(  Y -m(X) \big) g(X,Y) \big]  \\
& \leqslant t \, \Vert g \Vert_\infty \mathbb E \big[  \big( m_t( X) - m( X) \big) ^2\big]^{1/2} \mathbb E \big[  \big(  Y -m(X) \big) ^2 \big]^{1/2}\\
& \leqslant 2 t \, \mathbb E [Y^2 ] ^{1/2} \Vert g \Vert_\infty \mathbb E \big[  \big( m_t( X) - m( X) \big) ^2\big]^{1/2} 
\end{align*}
by Cauchy-Schwarz and Jensen's inequalities,  whence 
\begin{align}\label{eq:condi_verif}
   \E \big[ \big( m_t( X) - m( X) \big)^2 \big] \leq 4 t^2 \,  \Vert g \Vert_\infty^2 \mathbb E [Y^2 ] .
\end{align}
Hence,
$$ \lim_{t \to 0^+} \frac{\psi(P_t) - \psi(P)} t  =  \mathbb E \big[  \big( (2Y - m(X) ) m(X) - \psi(P) \big) g(X,Y) \big] $$
revealing $(x,y) \mapsto (2 y   - m(x) )m(x)- \psi(P)
\in \dot{\mathcal P}$ as the efficient influence function. 

\medskip

Because both parameters $\phi_1(P)$  and $\phi_2(P)$ are linear, they are particularly easy to deal with. For instance, 
$$ \frac{\phi_1(P_t) - \phi_1(P)}{t} = \mathbb E [Y g(X,Y) ] =  \mathbb E \big[(Y - \mathbb E[Y]) g(X,Y) \big] . $$
We conclude easily that the efficient influence functions of $\phi_1(P)$  and $\phi_2(P)$ are
\[
(x,y)\mapsto y^2-\phi_1(P) \quad \text{and} \quad (x,y)\mapsto 2(\E[Y] y-\phi_2(P)),
\]
although the first requires the additional condition $\mathbb E[Y^4] < \infty$  to lie in the tangent set $\dot{\mathcal P}$. 
\end{proof}

\subsection{Generalized Cramér-von Mises indices 
- case \texorpdfstring{$Q$}{Q} known}\label{ssec:given_data_CVM_mu_known}

In this section, we consider the estimation of the generalized Cramér-von Mises index recalled in \eqref{eq:formule} and \eqref{def:CVMmu} when $Q$ is known, in the Given-Data setting where we observe a sample $(X_1,Y_1),\dots,(X_n,Y_n)$ drawn independently from the distribution $P$. Then, as in Section \ref{ssec:given_data_sobol}, the model $\mathcal{P}$ contains all the distributions on $\R^d \times \R$ and the tangent set $\dot{\mathcal P}$ is given in \eqref{eq:tangent2}.

\medskip

For all $z \in \mathbb R^k$, $p(z | X) = \P(Y \leq z | X)$ denotes the conditional c.d.f.\ of $Y$ knowing $X$, characterized by the equality
$$ \mathbb E \big[  p(z | X) h(X) \big] = \mathbb E \big[ \ind_{\{ Y \leq z \}} h(X) \big]  $$
which holds for all integrable function $h: \mathbb R^d \to \mathbb R$.

\begin{prop}
\label{prop:EIF_given_data_mu_known}

In the Given-Data setting where $Q$ is known, the parameters 
\begin{itemize}
    \item $\psi^Q(P) = \displaystyle \int_{\mathbb R^k} \mathbb E \big[ p(z | X)^2 \big]\, d Q(z)$ has efficient influence function
    $$ \widetilde \psi^Q (x,y) = \int_{\R^k} (2 \ind_{\{y\leqslant z\}}   - p(z | x) )p(z | x) \, dQ(z) -\psi^Q(P),  $$
    \item $\phi^Q_1(P) = \displaystyle \int_{\mathbb R^k} \mathbb P(Y \leq z) \, d Q(z)$ has efficient influence function
    $$ \widetilde \phi^Q_1 (x,y) = 
    1-F_Z^-(y)- \phi^Q_1(P), $$
    \item $\phi^Q_2(P) = \displaystyle \int_{\mathbb R^k} \mathbb P(Y \leq z)^2 \, d Q(z)$ has efficient influence function
    $$ \widetilde \phi^Q_2 (x,y) = 2 \int_{\mathbb R^k}  \mathbb P(Y \leq z) \ind_{\{y\leqslant z\}}d Q(z)  -2\phi^Q_2(P).  $$
\end{itemize}
\end{prop}

\begin{proof}[Proof of Proposition  \ref{prop:EIF_given_data_mu_known}]
Let $P_t=(1+tg)P$ with $g$ bounded, $(X_t,Y_t)$ with distribution $P_t$ and denote by 
\begin{equation}\label{eq:pt} p_t(z | x) = \mathbb E[ \ind_{ \{ Y_t \leq z \}} | X_t = x ]
\end{equation}
for $x\in \R^d$ and  $z\in \R^k$ fixed.  Recall that, by definition of the conditional expectation, 
\begin{equation}\label{eq:infl_ident2} \mathbb E [ \ind_{ \{ Y_t \leq z \}} h(X_t) ]  = \mathbb E [ p_t(z | X_t) h(X_t) ]  \end{equation}
holds for all bounded function $h : \mathbb R^d \to \mathbb R$. Noting that $\psi_z(P_t)=\E[p_t(z | X)^2]=\E[\ind_{ \{ Y_t \leq z \}} p_t(z | X)]$, we get
\[
\psi_z(P_t) =   \mathbb E [ \ind_{ \{ Y_t \leq z \}} p_t(z | X_t) ] = \mathbb E [ \ind_{ \{ Y \leq z \}} p_t(z | X) ] + t \, \mathbb E [\ind_{ \{ Y \leq z \}} p_t(z | X) g(X,Y) ], 
\]
leading to 
\begin{align*} \frac{\psi_z(P_t) - \psi_z(P)} t & =  \mathbb E \Big[ \frac{p_t(z | X) - p(z | X)} t  \ind_{ \{ Y \leq z \}} \Big] +  \mathbb E \big[ \ind_{ \{ Y \leq z \}} p_t(z | X) g(X,Y) \big].
\end{align*}
 Taking $h = p(z \, | \, .)$ in  \eqref{eq:infl_ident2} yields in particular
$$  \mathbb E \big[ \ind_{ \{ Y \leq z \}} p(z | X) \big(1 +t g(X, Y) \big)\big] =  \mathbb E \big[ p_t(z | X) p(z | X) \big(1 +t g(X, Y) \big) \big]      $$
leading to
$$ \mathbb E \Big[ \frac{p_t(z | X) - p(z | X)} t  p(z | X) \Big] =  \mathbb E \big[ (\ind_{ \{ Y \leq z \}} - p_t(z | X) ) p(z | X) g(X, Y) \big]  $$
whence
\begin{align*} \frac{\psi_z(P_t) - \psi_z(P)} t 
& = \mathbb E \big[ \big( (\ind_{ \{ Y \leq z \}}  - p_t(z | X) ) p(z | X) + \ind_{ \{ Y \leq z\}} p_t(z | X) \big) g(X,Y) \big]. 
\end{align*}
We get
\begin{align*}  
\Big| \frac{\psi_z(P_t) - \psi_z(P)} t  &- \mathbb E \big[  \big( 2\ind_{\{Y\leqslant z\}} - p(z | X) \big)  p(z | X) g(X,Y) \big] \Big| \\
& =  \Big| \mathbb E \big[ (p_t(z | X) - p(z | X)) (\ind_{\{ Y\leqslant z\}} - p(z | X)) g(X, Y) \big] \Big| \\
& \leq  \Vert g \Vert_\infty \mathbb E \big[  \big( p_t(z | X) - p(z | X) \big)^2 \big]^{1/2} \mathbb E \big[  \big( \ind_{\{ Y\leqslant z\}} - p(z | X) \big)^2 \big]^{1/2} \\
& \leq 2 \Vert g \Vert_\infty \mathbb E \big[  \big( p_t(z | X) - p(z | X) \big)^2 \big]^{1/2}. 
\end{align*}
by Cauchy-Schwarz inequality. Using the same arguments as in Equation \eqref{eq:condi_verif}, 
we get \begin{align}\label{eq:condi_verif2}
    \sup_{z\in \R^k} \E \big[ \big( p_t(z | X) - p(z | X) \big)^2 \Big] \leq 4 t^2 \, \Vert g \Vert_\infty.
\end{align}
Then we deduce that 
$$ \lim_{t \to 0^+} \frac{\psi_z(P_t) - \psi_z(P)} t  =  \mathbb E \big[   (2\ind_{\{Y\leqslant z\}} - p(z | X) ) p(z | X) g(X,Y) \big] $$
revealing $\widetilde \psi_{z}\colon (x,y) \mapsto (2 \ind_{\{y\leqslant z\}}   - p(z | x) )p(z | x)- \psi_z(P)
\in \dot{\mathcal P}$ as the efficient influence function of the parameter $\psi_z$. We recover the wanted result by item 1 of Proposition \ref{prop:integral_influence} (Assumption \eqref{eq:condi} holds in view of \eqref{eq:condi_verif2}). 

As for $\phi_1^Q(P)$ and $\phi_2^Q(P)$, we start by checking that $\widetilde \phi_z(y)=\ind_{y\leqslant z}-\P(Y\leqslant z)$ is the efficient influence function of the parameter $P \mapsto \phi_z(P)=\P(Y\leqslant z)$. It remains to apply item 1 of Proposition \ref{prop:integral_influence} to get the result for $\phi_1^Q(P)$. Finally, by the chain rule, 
$2 \phi_z(P) \widetilde \phi_z$ 
is the efficient influence function of the parameter $P \mapsto \phi_z(P)^2$. The result for $\phi_2^Q(P)$ follows from item 1 of Proposition \ref{prop:integral_influence}.
\end{proof}

\subsection{Generalized Cramér-von Mises  indices 
- case \texorpdfstring{$Q$}{Q} unknown}\label{ssec:given_data_CVM_mu_unknown}

In this section, we consider the estimation of the generalized Cramér-von Mises index recalled in \eqref{eq:formule} and \eqref{def:CVMmu} when $Q$ is unknown. We now consider the Given-Data setting where we observe a sample \[
(X_1,Y_1,Z_1),\dots,(X_n,Y_n,Z_n)
\]
drawn independently from some distribution $ P\!\otimes\! Q$. Hence, the model is 
$$\mathcal{P} \! \otimes \! \mathcal Q = \{ P \! \otimes \! Q : P \in \mathcal P , Q \in \mathcal Q \}$$
and the tangent set at $P\!\otimes\! Q$ is the direct sum $\dot{\mathcal P} \!\oplus\! \dot{\mathcal Q}$ by Lemma \ref{lem:tangent_prod}. 



\begin{prop}
\label{prop:EIF_given_data_mu_unknown}
In the Given-Data setting where $Q$ is unknown, the parameters 
\begin{itemize}
    \item $\Psi(P \! \otimes \! Q) = \psi^Q(P) = \displaystyle \int_{\mathbb R^k} \mathbb E \big[ p(z | X)^2 \big]\, d Q(z)$ has efficient influence function
    $$ \widetilde \Psi  (x,y,z)  = \int_{\R^k} (2 \ind_{\{y\leqslant z'\}}   - p(z' | x) )p(z' | x) \, dQ(z') + \E[p(z|X)^2] - 2 \psi^Q(P),  $$
    \item $\Phi_1 (P\! \otimes \! Q) = \phi^Q_1(P) = \displaystyle \int_{\mathbb R^k} \mathbb P(Y \leq z) \, d Q(z)$ has efficient influence function
    $$ \widetilde \Phi_1 (x,y,z) 
    =1-F_Z^-(y) + F_Y( z)  - 2 \phi^Q_1(P), $$
    \item $\Phi_2(P\! \otimes \! Q) = \phi_2^Q(P) = \displaystyle \int_{\mathbb R^k} \mathbb P(Y \leq z)^2 \, d Q(z)$ has efficient influence function
    $$ \widetilde \Phi_2 (x,y,z) = 2 \int_{\R^k}  F_Y( z') \ind_{\{y\leqslant z'\}}\, d Q(z') +  F_Y(z)^2 -3 \phi^Q_2(P).  $$
\end{itemize}
\end{prop}

\begin{proof}[Proof of Proposition  \ref{prop:EIF_given_data_mu_unknown}]
It suffices to apply item 2 of Proposition \ref{prop:integral_influence} using the expressions of $\widetilde \psi_z$ and $\widetilde \phi_z$ computed in the proof of Proposition \ref{prop:EIF_given_data_mu_known}.
\end{proof}

\subsection{Cramér-von Mises indices 
} 
\label{ssec:given_data_CVM_mu_Y}

In this section, we consider the estimation of the Cramér-von Mises recalled  in \eqref{def:CVM} with $Q=P_Y$, where $P_Y$ is the marginal distribution of $Y$.
Let $(X,Y, Z)$ with distribution $P \!\otimes\! P_Y$. 

\begin{prop}
\label{prop:EIF_given_data_mu_Y}
In the Given-Data setting where $Q=P_Y$, the parameters 
\begin{itemize}
    \item $\Psi(P) = \psi^P(P) = \displaystyle \int_{\mathbb R^k} \mathbb E \big[ p(z | X)^2 \big]\, d P_Y(z)$ has efficient influence function
    $$ \widetilde \Psi  (x,y)  = \E \big[ (2 \ind_{\{y\leqslant Y\}}   - p(Y| x) )p(Y | x) \big] + \E[p(y|X)^2] - 2\psi^P(P),  $$
    \item $\Phi_1 (P) = \phi^P_1(P) = \displaystyle \int_{\mathbb R^k} \mathbb P(Y \leq z) \, d P_Y(z)$ has efficient influence function
    $$ \widetilde \Phi_1 (x,y) =  1- \mathbb P( Y = y) -2\phi^P_1(P), $$
    \item $\Phi_2(P) = \phi_2^P(P) = \displaystyle \int_{\mathbb R^k} \mathbb P(Y \leq z)^2 \, d P_Y(z)$ has efficient influence function
    $$ \widetilde \Phi_2 (x,y) = 2 \E[  F_Y(Y) \ind_{\{y\leqslant Y\}}] +F_Y(y)^2 -3\phi^P_2(P).  $$
\end{itemize}
\end{prop}

\begin{proof}[Proof of Proposition \ref{prop:EIF_given_data_mu_Y}]
It suffices to apply Corollary \ref{cor:integral_influence} and Proposition \ref{prop:EIF_given_data_mu_unknown}. 
\end{proof}

\begin{rem} For a real-valued $Y$, the Cram\'er-von Mises index, obtained by integrating over the distribution $P_Y$, 
has constant $\Phi_1(P) = 1/2$ and $\Phi_2(P) = 1/3$ for a marginal distribution $P_Y$ with continuous c.d.f.\ (i.e.~with no atom), and the efficient influence function is trivially zero in this case. 
\end{rem}

\subsection{Extension to universal sensitivity indices}
 \label{ssec:gicen_data_univ}

It suffices to slightly adapt the proofs of Propositions \ref{prop:EIF_given_data_mu_known} and \ref{prop:EIF_given_data_mu_unknown} to get the same results for the universal indices defined in \eqref{def:univ}, provided that all test functions $T_z(Y)$ at $Y$ are square-integrable and the associated parameters $\psi_z = \mathbb E [ \mathbb E[ T_z(Y) | X]^2 ]$, $\phi_{1,z} = \mathbb E[ T_z(Y)^2]$, and $\phi_{2,z} = \mathbb E[ T_z(Y)]^2$ all satisfy Condition \eqref{eq:condi} of Proposition \ref{prop:integral_influence}.

\medskip

For all $z \in \mathcal Z$, we replace the conditional c.d.f.\ $p(z| X)=\P(Y\leqslant z| X)$ by the conditional expectation $q(z | X) = \E[T_z(Y) | X]$ of the test function $T_z$ knowing $X$. 

\medskip

When $Q$ is known, the parameters 
\begin{itemize}
    \item $\psi^Q(P) = \displaystyle \int_{\mathcal Z} \mathbb E \big[ q(z | X)^2 \big]\, d Q(z)$ has efficient influence function
    $$ \widetilde \psi^Q (x,y) = \int_{\mathcal Z} (2 T_z(y)  - q(z | x) )q(z | x) \, dQ(z) -\psi^Q(P),  $$
    \item $\phi^Q_1(P) = \displaystyle \int_{\mathcal Z} \E[T_z(Y)^2] \, d Q(z)$ has efficient influence function
    $$ \widetilde \phi^Q_1 (x,y) =
    \int_{\mathcal Z}  T_z(y)^2\, d Q(z) - \phi^Q_1(P), $$
    \item $\phi^Q_2(P) = \displaystyle \int_{\mathcal Z} \mathbb E[T_z(Y)]^2 \, d Q(z)$ has efficient influence function
    $$ \widetilde \phi^Q_2 (x,y) = 2 \int_{\mathcal Z}  \mathbb E[T_z(Y)]  T_z(y)\, d Q(z)  -2\phi^Q_2(P).  $$
\end{itemize}

When $Q$ is unknown, the parameters 
\begin{itemize}
    \item $\Psi(P \! \otimes \! Q) $ has efficient influence function
    $ \widetilde \Psi  (x,y,z)  = \widetilde \psi  (x,y) + \E[q(z|X)^2] -  \psi^Q(P),  $
    \item $\Phi_1 (P\! \otimes \! Q) $ has efficient influence function
    $ \widetilde \Phi_1 (x,y,z) 
    =\widetilde \phi_1 (x,y) +  \mathbb E[T_z(Y)^2]  -  \phi^Q_1(P), $
    \item $\Phi_2(P\! \otimes \! Q) $ has efficient influence function
    $ \widetilde \Phi_2 (x,y,z) = \widetilde \phi_2 (x,y) +  \mathbb E[T_z(Y)]^2 - \phi^Q_2(P).  $
\end{itemize}

\section{Asymptotically efficient estimators}\label{sec:assymp_eff_estim}

\subsection{Asymptotically efficient estimators for Sobol' indices}\label{sec:assymp_eff_estim_sobol}

\paragraph{Pick-Freeze setting} In \cite{janon2012asymptotic}, the authors proposed an asymptotically efficient estimator for Sobol' indices. See Equation (6) and Proposition 2.5 in \cite{janon2012asymptotic}.

\paragraph{Given-Data setting}


In \cite{doksum1995nonparametric}, the authors considered a truncated version of $\psi(P)=\E[\E[Y|X]^2]$.  To estimate $\psi(P)$, they first estimate the regression function $m$ by a kernel estimator and then use a one-step procedure to improve the corresponding  plug-in estimator.
For a one-dimensional input $X$, an asymptotically efficient estimator of $\psi(P)$ that relies on a preliminary kernel estimator of the input's density was given in \cite{DaVeiga13}, while a simpler alternative approach based on ordered statistics can be found in \cite{klein2024efficiency}. More recently, combining the approaches of   \cite{doksum1995nonparametric} and mirror transformations (see \cite{bertin2020adaptive} and \cite{pujol2022nonparametric}), asymptotically efficient estimators of $\psi(P)$ are provided in \cite{GKLPdV21} for an input $X$ of any dimension, under adequate regularity conditions.

\subsection{Asymptotically efficient estimators for (generalized) Cramér-von Mises indices}\label{sec:assymp_eff_estim_cvm}

\paragraph{Pick-Freeze setting} 

For (generalized) Cramér-von Mises indices, one can build an asymptotically efficient estimators of all three parameters if $Q$ is known, under mild regularity conditions. Indeed, for a fixed $z \in \mathbb R^k$, the estimators 
$$ \psi_z(P) = F_{Y, Y'}(z, z) \quad \text{ and } \quad \phi_z(P) = F_Y(z) $$
are efficiently estimated by their empirical versions (using all data available for the second one),
$$  \widehat \psi_{z,n} = \frac 1 n \sum_{i=1}^n \ind_{Y_i \leq z , Y_i' \leq z}  \quad \text{ and } \quad \widehat \phi_{z,n} = \frac 1 {2n} \sum_{i=1}^n \big( \ind_{Y_i \leq z} + \ind_{ Y_i' \leq z} \big). $$
These are known to be asymptotically efficient by Lemma \ref{lem:eff_expansion} (see also \cite[Lemma 2.6]{janon2012asymptotic}) as equal to their first order expansion with the efficient influence function provided in the proof of Proposition \ref{prop:EIF_PF_mu_known}. Note that, as a result, $\phi_z^2(P)$ is also efficiently estimated by $\widehat \phi_{z,n}^2$, by virtue of Equations \eqref{eq:EIF_compo} and \eqref{eq:estim_compo}. \\

Integrated over a known measure $Q$ preserves the asymptotic efficiency:
$$ \widehat \psi^Q_n = \int \widehat \psi_{z,n} d Q(z) 
= \frac 1 n \sum_{i=1}^n \big( 1 - F_Z^{-}(Y_i \vee Y'_i) \big) = \psi^Q(P) + \frac 1 n \sum_{i=1}^n \widetilde \psi^Q( Y_i , Y'_i) $$
where $\widetilde \psi^Q$ is given in Proposition \ref{prop:EIF_PF_mu_known}. The same goes for the other two parameters. \\

If $Q$ is unknown but an i.i.d.\ sample $Z_1,\cdots,Z_n$ drawn from $Q$ is available, the empirical estimator for the parameter $P \! \otimes \! Q \mapsto \Psi(P \! \otimes \! Q)= \psi^Q(P)$ (denoted by $\Psi( F_{Y,Y'} , F_{Z})$ by abuse of notation) becomes 
$$ \widehat \Psi_n = \Psi(\widehat F_{Y,Y'} , \widehat F_{Z}) = \int \widehat F_{Y,Y'} (z,z) d \widehat F_{Z} (z) $$
where 
$$ \widehat F_{Y,Y'}(y,y') = \frac 1 n \sum_{j=1}^n \ind_{Y_j \leq y, Y'_j \leq y' } \quad \text{ and } \quad \widehat F_{Z}(z) = \frac 1 n \sum_{j=1}^n \ind_{Z_j \leq z } \quad , \quad y,y',z \in \mathbb R^k  $$
denote the empirical c.d.f.s for $(Y,Y')$ and $Z$ respectively. Since the map $\Psi(. \, , \, .)$
is bi-linear, the first order expansion of the empirical estimator satisfies
\begin{equation}\label{eq:dl_cvm}  
\Psi(\widehat F_{Y,Y'} , \widehat F_{Z}) = \Psi(F_{Y,Y'} , F_{Z}) +  \Psi(\widehat F_{Y,Y'} - F_{Y,Y'} , F_{Z})  +  \Psi(F_{Y,Y'} , \widehat F_{Z} - F_Z) + o_{\P}(1/\sqrt n),
\end{equation} 
where the left out term $\Psi(\widehat F_{Y,Y'} - F_{Y,Y'} , \widehat F_{Z} - F_Z) = o_{\P}(1/\sqrt n)$ by Donsker's theorem \cite{billingsley2008probability}. It remains to notice that
$$ \Psi(\widehat F_{Y,Y'} - F_{Y,Y'} , F_{Z}) = \frac 1 n \sum_{i=1}^n \big( 1 - F_Z^-(Y_i \vee Y'_i)  \big)- \psi^Q(P) $$
while
$$ \Psi(F_{Y,Y'} , \widehat F_{Z} - \widehat F_{Z}) = \frac 1 n \sum_{i=1}^n F_{Y,Y'}(Z_i,Z_i) - \psi^Q(P) $$
to deduce that
$$ \widehat \Psi_n = \psi^Q(P) + \frac 1 n \sum_{i=1}^n \widetilde \Psi(Y_i, Y'_i, Z_i) + o_{\P}(1/\sqrt n)  $$
where the efficient influence function $\widetilde \Psi(y,y',z) =  1 - F_Z^-(y \vee y') + F_{Y,Y'} (z,z) - 2\psi^Q(P)  $ is given in Proposition \ref{prop:EIF_PF_mu_unknown}.

\paragraph{Given-Data setting}

Asymptotic efficiency is achieved under strong consistency conditions for a preliminary estimator of the conditional probability function $p(. | X)$, such as those proposed in \cite{doksum1995nonparametric} and \cite{GKLPdV21} for the conditional expectation function, as already discussed in the case of Sobol' indices. Assuming that an estimator $\widehat p(. | X)$ is available, so that 
$$ \widehat \psi_{z,n} = \frac 1 n \sum_{i=1}^n  \widehat p(z | X_i) \big( 2 \ind_{Y_i \leq z} -  \widehat p(z | X_i) \big)  $$
is an asymptotically efficient estimator of $\psi_z(P) = \mathbb E [ p^2(Y \leq z | X) ]$ for all $z \in F$, an additional technical assumption might be needed to conserve the asymptotic efficiency of the integrated estimator $\widehat \psi^Q_{n} = \int \widehat \psi_{z,n} d Q(z)$.
Indeed if $\widehat \psi_{z,n}$ satisfies \eqref{eq:ass_eff} of Lemma \ref{lem:eff_expansion} the $o_{\P}( n^{-1/2})$
 terms now depend on $z$ and one has to check that integrated those terms in $z$ provides a $o_{\P}( n^{-1/2})$ term. The same remark holds for Cram\'er-von Mises indices.

\subsection{Asymptotically efficient estimators for Cramér-von Mises indices} 

\paragraph{Pick-Freeze setting} 

For the Cramér-von Mises index where $Q = P_Y$, one verifies by the decomposition of Equation \eqref{eq:dl_cvm} that the estimator
$$ \widehat \Psi_n = \frac 1 2 \big( \Psi(\widehat F_{Y,Y'} , \widehat F_{Y}) + \Psi(\widehat F_{Y,Y'} , \widehat F_{Y'}) \big) $$
satisfies the expression of Lemma \ref{lem:eff_expansion} with the efficient influence function given in Proposition \ref{prop:EIF_PF_mu_Y}. It is thus asymptotically efficient.

\bibliographystyle{abbrv}
\bibliography{biblio_SA}

\end{document}